\newtheorem{theorem}{Theorem}
\newtheorem{corollary}[theorem]{Corollary}
\newtheorem{definition}[theorem]{Definition}
\newtheorem{example}[theorem]{Example}
\newtheorem{lemma}[theorem]{Lemma}
\newtheorem{proposition}[theorem]{Proposition}
\newtheorem{remark}[theorem]{Remark}
\newenvironment{proof}[1][Proof]{\noindent\textbf{#1.} }{\ \rule{0.5em}{0.5em}}
\begin{document}

\title{Some topological properties of one dimensional cellular automata}
\author{Rezki Chemlal. \\
Laboratory of applied mathematics.\\
Faculty of exact sciences \and University of Bejaia, Algeria.}
\maketitle

\begin{abstract}
Cellular automata, CA for short are continuous maps defined on the set of
configurations over a finite alphabet $A$ that commutes with the shift. They
are characterized by the existence of local function which determine by
local behavior the image of an element of the configurations space.We will
study periodic factors of cellular automata. We classify periodic factors
according to their periods. We show that for a surjective cellular automaton
with equicontinuity points but without being equicontinuous there is an
infinity of classes of equivalence of periodic factors.
\end{abstract}

\section*{Introduction}

Cellular automata are continuous maps commuting with the Bernoulli shift.
They are used to model complex systems, for example in statistical physics
as microscale models of hydrodynamics \cite{Chop03}, to model social choices 
\cite{Nowak96} or tumor growth \cite{ABM03}.

The study of CA as symbolic dynamical systems began with Hedlund \cite{Hed69}%
. Dynamical behavior of cellular automata is studied mainly in the context
of discrete dynamical systems by equipping the space of configurations with
the product topology which make it homeomorphic to the Cantor space.

Any equicontinuous dynamical system on a zero dimensional space is almost
periodic. An equicontinuous surjective cellular automaton is periodic \cite%
{BT00}.

We will classify periodic factors of surjective equicontinuous cellular
automata according to their periods. Two periodic dynamical systems with the
same period will belong to the same equivalence class.

We show that any cellular automaton with equicontinuity points has a
periodic factor. Furthermore any surjective cellular automaton with
equicontinuity points is either periodic or has an infinity of classes of
periodic factors.

As a corollary the maximal equicontinuous factor of a surjective cellular
automaton with equicontinuous points but without being equicontinuous is not
a cellular automaton.

The paper is divided into two parts, part one is for basic definition and
background, part two is for new results.

\section{Definitions and background.}

\subsection{Topological dynamics.}

A topological dynamical system $\left( X,T\right) $ consists of a compact
metric space $X$ and a continuous self--map $T$ .

A point $x$ is said periodic if there exists $p>0$ with $T^{p}\left(
x\right) =x.$ The least $p$ with this property is called the period of $x.$
A\ point $x$ is eventually or ultimately periodic if $T^{m}\left( x\right) $
is periodic for some $m\geq 0.$

In the same way a dynamical system is said periodic if there exists $p>0$
with $T^{p}\left( x\right) =x$ for every $x\in X$ and eventually periodic if 
$T^{m}$ is periodic for some $m\geq 0.$

A point $x\in X$ is said to be an equicontinuity point, or to be Lyapunov
stable, if for any $\epsilon >0$, there exists $\delta >0$ such that if $%
d\left( x,y\right) <\delta $ one has $d\left( T^{n}\left( y\right)
,T^{n}\left( x\right) \right) <\epsilon $ for any integer $n\geq 0$.

When all points of $\left( X,T\right) $ are equicontinuity points $\left(
X,T\right) $ is said to be equicontinuous: since $X$ is compact an
equicontinuous system is uniformly equicontinuous. We say that $\left(
X,T\right) $ is almost equicontinuous if the set of equicontinuous points is
residual.

We say that $\left( X,T\right) $ is sensitive if for any $x\in X$ we have :%
\begin{equation*}
\exists \epsilon >0,\forall \delta >0,\exists y\in B_{\delta }\left(
x\right) ,\exists n\geq 0\text{ }\mathrm{such}\text{ \textrm{that }}d\left(
T^{n}\left( y\right) ,T^{n}\left( x\right) \right) \geq \epsilon .
\end{equation*}

We say that $\left( X,T\right) $ is expansive if we have :%
\begin{equation*}
\exists \epsilon >0,\forall x\neq y\in X,\exists n\geq 0,d\left( T^{n}\left(
x\right) ,T^{n}\left( y\right) \right) \geq \epsilon .
\end{equation*}

A dynamical system $\left( X,T\right) $\emph{\ }is transitive if for any
nonempty open sets $U,V\subset A^{\mathbb{Z}}$ there exists $n>0$ with $%
U\cap F^{-n}\left( V\right) \neq \emptyset .$ It is said topologically
mixing if for any nonempty open sets $U,V\subset A^{\mathbb{Z}},U\cap
F^{-n}\left( V\right) \neq \emptyset $ for all sufficiently large $n.$

\subsection{Factors}

Let $(X,T)$ and $(Y,U)$ two dynamical systems and $\pi $ a continuous
surjective map $\pi :X\rightarrow Y$ such that $\pi \circ T=U\circ \pi .$We
say that $\pi $ is a factor map and $(Y,U)$ is a factor of $(X,T).$If $\pi $
is bijective, we say that $\pi $ is a conjugacy and that $(X,T)$ and $(Y,U)$
are conjugate. The conjugacy is an equivalence relation on dynamical systems.%
\begin{equation*}
\begin{tabular}{lll}
$X$ & $\overset{T}{\longrightarrow }$ & $X$ \\ 
$\pi \downarrow $ &  & $\downarrow \pi $ \\ 
$Y$ & $\overset{U}{\longrightarrow }$ & $Y$%
\end{tabular}%
\end{equation*}

The factor relation is transitive in the sense that if $\left( X,T\right)
,\left( Y,U\right) ,\left( Z,V\right) $ are three dynamical systems such
that $\left( Y,U\right) $ is a factor of $\left( X,T\right) $ and $\left(
Z,V\right) $ is a factor of $\left( Y,U\right) $\ then $\left( Z,V\right) $
is a factor of $\left( X,T\right) .$

From the definition the factor of a surjective dynamical system is still
surjective. If $\left( X,T\right) $ is periodic of period $p$ and $\left(
Y,U\right) $ is a factor of $\left( X,T\right) $ are two dynamical systems
such that $\left( Y,U\right) $ is a factor of $\left( X,T\right) $. If $%
\left( X,T\right) $ and $\left( Y,U\right) $ are periodic of respectively
periods $p$ and $q$ then $q$ is a divisor of $p.$

\begin{proposition}
Any dynamical system $\left( X,T\right) $ possesses a maximal equicontinuous
factor $\pi :\left( X,F\right) \rightarrow \left( Y,U\right) ,$ such that
for any equicontinuous factor $\varphi :\left( X,T\right) \rightarrow \left(
Z,V\right) $ there is a unique factor map $\psi :\left( Y,U\right)
\rightarrow \left( Z,V\right) $ with $\psi \circ \pi =\varphi .$ The system $%
\left( Y,U\right) $ is unique up to conjugacy.
\end{proposition}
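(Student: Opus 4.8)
The plan is to realise every equicontinuous factor of $(X,T)$ as a closed $T$-invariant equivalence relation on $X$, and to build $(Y,U)$ as the quotient by the \emph{smallest} such relation. Concretely, let $\mathcal{R}$ be the collection of all closed $T$-invariant equivalence relations $R\subseteq X\times X$ for which the quotient system $(X/R,T_R)$, where $T_R$ is the map induced by $T$ on classes, is equicontinuous. First I would note that $\mathcal{R}\neq\emptyset$, since the relation $X\times X$ collapses $X$ to a point and the one-point system is trivially equicontinuous. I then set $R_0=\bigcap_{R\in\mathcal{R}}R$ and propose $(Y,U)=(X/R_0,T_{R_0})$, with $\pi$ the canonical projection.

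For existence I must verify three things. (i) $R_0$ is again a closed $T$-invariant equivalence relation: an arbitrary intersection of equivalence relations is an equivalence relation, an intersection of closed sets is closed, and invariance passes to intersections. (ii) $\mathcal{R}$ is closed under finite intersection, hence directed under reverse inclusion: if $R_1,R_2\in\mathcal{R}$, the natural injection $X/(R_1\cap R_2)\hookrightarrow X/R_1\times X/R_2$ is a conjugacy onto a closed invariant subsystem of the product, and since a finite product of equicontinuous systems is equicontinuous and a subsystem of an equicontinuous system is equicontinuous, $R_1\cap R_2\in\mathcal{R}$. (iii) The quotient $(X/R_0,T_{R_0})$ is itself equicontinuous. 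This is the decisive step: I would identify $X/R_0$ with the inverse limit $\varprojlim_{R\in\mathcal{R}}X/R$, realise that inverse limit as a closed invariant subsystem of the product $\prod_{R\in\mathcal{R}}X/R$, and then invoke that products and closed invariant subsystems of equicontinuous systems remain equicontinuous. Compactness of $X$, hence of each quotient, guarantees that every factor is \emph{uniformly} equicontinuous, which is what one needs to control the limit, and the quotient $X/R_0$ of the compact metric space $X$ is again compact metric.

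The universal property is then comparatively formal. Given any equicontinuous factor $\varphi\colon(X,T)\to(Z,V)$, its fibre relation $R_\varphi=\{(x,y):\varphi(x)=\varphi(y)\}$ is a closed $T$-invariant equivalence relation with $X/R_\varphi$ conjugate to $(Z,V)$; hence $R_\varphi\in\mathcal{R}$ and therefore $R_0\subseteq R_\varphi$. Consequently the rule $\psi(\pi(x))=\varphi(x)$ is well defined on $Y=X/R_0$; it is continuous because $Y$ carries the quotient topology and $\varphi$ is continuous, it is surjective because $\varphi$ is, and it satisfies $\psi\circ U=V\circ\psi$ because $\varphi\circ T=V\circ\varphi$ while $U$ is induced by $T$. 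Uniqueness of $\psi$ is immediate from surjectivity of $\pi$, since $\psi\circ\pi=\varphi$ pins $\psi$ down on all of $Y$.

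Finally, uniqueness up to conjugacy follows from the universal property in the usual way: if $(Y',U')$ with projection $\pi'$ is another maximal equicontinuous factor, applying the universal property in both directions gives factor maps $\psi\colon Y\to Y'$ and $\psi'\colon Y'\to Y$ with $\psi\circ\pi=\pi'$ and $\psi'\circ\pi'=\pi$; then $\psi'\circ\psi\circ\pi=\pi$ and $\psi\circ\psi'\circ\pi'=\pi'$, so by the uniqueness clause $\psi'\circ\psi$ and $\psi\circ\psi'$ are the respective identities, making $\psi$ a conjugacy. I expect the only genuine difficulty to lie in step (iii): rigorously transferring equicontinuity through a possibly uncountable inverse limit. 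The care is needed precisely in proving preservation of equicontinuity under products (where the product need not be metrizable and one works with the uniform structure) and its heredity to closed invariant subsystems, using compactness to pass between pointwise and uniform equicontinuity; an equivalent route, which may be cleaner, is to show directly that $R_0$ is the smallest closed invariant equivalence relation containing the regionally proximal relation and that every such quotient is equicontinuous.
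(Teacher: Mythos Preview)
The paper does not prove this proposition at all: it is stated in the background section as a known structural fact about dynamical systems, with no argument and no reference, and is used later only to speak of ``the'' maximal equicontinuous factor. So there is nothing to compare your approach against.

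Your construction is the standard one and is correct in outline. A few remarks on the places you flagged as delicate. The identification $X/R_0\cong\varprojlim_{R\in\mathcal{R}}X/R$ does hold: injectivity is immediate from $R_0=\bigcap R$, and surjectivity uses compactness of $X$ together with the directedness you established in (ii) to show that any coherent family of classes has nonempty intersection in $X$. For (iii), the cleanest way to avoid worrying about metrizability of the big product is exactly what you suggest at the end: work with uniform structures, where equicontinuity is preserved under arbitrary products and under passage to subsystems, and then observe separately that $X/R_0$ itself is compact metrizable because it is a Hausdorff quotient of the compact metric space $X$ by a closed equivalence relation. Your alternative via the regionally proximal relation is also standard (this is the Ellis--Gottschalk approach) and would work equally well.
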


\subsection{Symbolic dynamics}

Let $A$ be a finite set called the \emph{alphabet}. A word is a any finite
sequence of elements of $A$.Denote by $A^{\ast }=\cup _{n\in \mathbb{N}%
^{\ast }}A^{n}$ the set of all finite words $u=u_{0}...u_{n-1};$the length
of a word $u\in A^{n}$ is $\left\vert u\right\vert =n.$

Let $A^{\mathbb{Z}}$ denote the set of all functions $x:\mathbb{Z\rightarrow 
}A,$ which we regard as $\mathbb{Z}$-indexed \emph{configurations} of
elements in $A$.

We write such a configuration as $x=\left( x_{n}\right) _{n\in \mathbb{Z}},$
where $x_{n}\in A$ for all $n\in \mathbb{Z}$, and refer to $A^{\mathbb{Z}}$
as \emph{configuration space}.

Treat $A$ as a discrete topological space; then A is compact, so $A^{\mathbb{%
Z}}$ is compact in the Tychonoff product topology. In fact, $A^{\mathbb{Z}}$
is a Cantor space: it is compact, perfect,totally disconnected, and
metrizable.

The standard metric on $A^{\mathbb{Z}}$ is defined by

\begin{equation*}
d\left( x,y\right) =2^{-n}with\text{ }n=mini\geq 0:x_{i}\neq y_{i}\,\mathrm{%
or}\,x_{-i}\neq y_{-i}
\end{equation*}

Let $x$ a configuration of $A^{\mathbb{Z}},$ for two integers $i,j$ with $%
i<j $ we denote by $x\left( i,j\right) \in A^{j-i+1}$ the word $%
x_{i}...x_{j}.$

For any word $u$ we define the cylinder $\left[ u\right] _{l}=\left\{ x\in
A^{\mathbb{Z}}:x\left( l,l+\left\vert u\right\vert -1\right) =u\right\} $
where the word $u$ is at the position $l.$ The cylinder $\left[ u\right]
_{0} $ is simply noted $\left[ u\right] $. The cylinders are clopen (closed
open) sets.

The metric $d$ is non archimedian; that is $d\left( x,y\right) \leq \max
\left\{ d\left( x,z\right) ,d\left( z,y\right) \right\} $. Consequently any
two cylinders either one contain the other or they intersect trivially.

The shift map $\sigma :$ $A^{\mathbb{Z}}\rightarrow $ $A^{\mathbb{Z}}$ is
defined as $\sigma \left( x\right) _{i}=x_{i+1},$ for any $x\in A^{\mathbb{Z}%
}$ and $i\in \mathbb{Z}$. The shift map is a continuous and bijective
function on $A^{\mathbb{Z}}.$ The dynamical system $\left( A^{\mathbb{Z}%
},\sigma \right) $ is commonly called \emph{full shift, }it is mixing and
have a dense set of periodic points.

The configuration $^{\infty }u^{\infty }$ is defined by $\left( ^{\infty
}u^{\infty }\right) _{k\left\vert u\right\vert +i}=u_{i}$ for $k\in \mathbb{Z%
}$ for $k\in \mathbb{Z}$, $0\leq i<\left\vert u\right\vert $ and $u\in
A^{\ast }.$ The configuration $^{\infty }u^{\infty }$ is shift periodic and
is called \emph{spatially periodic} configuration.

\subsection{Cellular automata}

A cellular automaton is a continuous map $F:A^{\mathbb{Z}}\rightarrow A^{%
\mathbb{Z}}$ if there exists integers $m\leq a$ (memory and anticipation)
and a local rule $f:A^{a-m+1}\rightarrow A$ such that for any $x\in A^{%
\mathbb{Z}}$ and any $i\in \mathbb{Z}$ 
\begin{equation*}
F\left( x\right) _{i}=f\left( x_{\left[ i+m,i+a\right] }\right)
\end{equation*}%
We call $r=\max \left\{ -m,a\right\} $ the radius of $F$ and $d=a-m\geq 0$
its diameter.

Cellular automata can be topologically characterized as morphisms of the
full shift \cite{Hed69}.

A cellular automaton is surjective iff any nonempty word has exactly $\left(
\#A\right) ^{d}$ preimages.

If a cellular automaton is not surjective then there exists a diamond i.e. a
word $w\in A^{d}$ and distinct finite words $u,v$ of same length such that $%
f\left( wuw\right) =f\left( wvw\right) .$ As a corollary any injective
cellular automaton is surjective.

\subsubsection{Equicontinuity and Kurka's classification}

\begin{definition}
Let $F$ be a cellular automaton. A word $w$ with $\left\vert w\right\vert
\geq s$ is an $s$-blocking word for $F$ if there exists $p\in \left[
0,\left\vert w\right\vert -s\right] $ such that for any $x,y\in \left[ w%
\right] $ we have $F^{n}\left( x\right) \left( p,p+s\right) =F^{n}\left(
y\right) \left( p,p+s\right) $ for all $n\geq 0.$
\end{definition}

\begin{proposition}[\protect\cite{Kur03}]
Let $F$ be a cellular automaton\ with radius $r>0.$ The following conditions
are equivalent.\newline
1. $F$ is not sensitive. \newline
2. $F$ has an $r-$blocking word.\newline
3. $F$ has some equicontinuous point.\newline
4. There exists $k>0$ such that every word $u\in A^{2k+1}$ is an $r$%
-blocking word.\newline
5. $F$ is almost periodic.
\end{proposition}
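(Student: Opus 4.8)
The plan is to treat the $r$-blocking word as the central object and route every implication through it, since conditions $(1)$ and $(3)$ are really dual. Unwinding the given definition, ``$F$ is not sensitive'' is exactly the failure, at some point $x$, of the sensitivity condition, and that failure is word-for-word the statement that $x$ is an equicontinuity point; so $(1)\Leftrightarrow(3)$ needs only definition chasing. The genuine work is then to connect the existence of an equicontinuity point with the existence of a blocking word $(2)$, to upgrade this to the uniform statement $(4)$, and to read off almost periodicity $(5)$.

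For $(3)\Rightarrow(2)$ I would start from an equicontinuity point $x$ and exploit its modulus of equicontinuity on the central window. Taking $\epsilon=2^{-r}$ there is $\delta=2^{-k}$, with $k\geq r$, such that every $y$ with $d(x,y)<\delta$ satisfies $F^{n}(y)(-r,r)=F^{n}(x)(-r,r)$ for all $n\geq 0$. I claim the central pattern $w=x(-k,k)$ is then an $r$-blocking word: any two configurations in $[w]$ placed at $-k$ agree with $x$ on $[-k,k]$, hence both lie in $B_{\delta}(x)$, so both agree with $F^{n}(x)$ on $[-r,r]$ for every $n$ and therefore agree with each other there. The blocking window is the central one of width $r+1$, which lies inside the span of $w$ because $k\geq r$.

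For $(2)\Rightarrow(3)$ I would turn a blocking word $w$ into a wall. Placing copies periodically, e.g. in $x^{\ast}={}^{\infty}w^{\infty}$, produces blocking windows recurring with bounded gaps along the whole line. Given a target window $[-m,m]$ I would pick one blocking occurrence to its left and one to its right and choose $\delta$ so small that every nearby $y$ agrees with $x^{\ast}$ across both walls. The crux is confinement: because $F$ has radius $r$ and each wall is a constant window of width $r+1$, the dependence cone of any cell strictly between the walls never reaches past a wall, so an induction on $n$ shows that $F^{n}(y)$ and $F^{n}(x^{\ast})$ agree on the whole inter-wall region, in particular on $[-m,m]$, for all $n$; thus $x^{\ast}$ is an equicontinuity point. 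Since the set of equicontinuity points is a $G_{\delta}$ and the wall construction makes it dense (any cylinder can be flanked by copies of $w$), it is residual, and the uniform recurrence of the walls makes these points almost periodic, which is the route I would use to obtain $(5)$ and to close the loop back to $(1)$.

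The direction $(4)\Rightarrow(2)$ is immediate, since then any single word of length $2k+1$ is already blocking. The step I expect to be the main obstacle is the uniform conclusion $(4)$: passing from one blocking word to a single length $2k+1$ within which blocking is forced for every pattern. The natural attempt is a compactness and pigeonhole argument over the finitely many words of length $2k+1$, combined with shift commutation, that propagates a blocking window into every long enough pattern; the delicate part is extracting a common window length $k$ together with a valid blocking position simultaneously for all words, and it is here that one must lean on the full strength of the hypothesis rather than on the mere existence of a single wall.
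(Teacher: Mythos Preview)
The paper does not supply its own proof of this proposition; it is quoted with a citation to K\r{u}rka's monograph and left unproved, so there is no in-paper argument to compare your sketch against. That said, two substantive remarks are in order.

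First, your outline for $(2)\Leftrightarrow(3)$ and the wall/residuality argument are the standard ones and are essentially correct. One caveat on $(1)\Leftrightarrow(3)$: your claim that this is pure definition chasing relies on the paper's particular phrasing of sensitivity, in which the constant $\epsilon$ is quantified after the point $x$ and so may depend on it. Under the more common uniform definition (a single $\epsilon$ working for all $x$), the negation of sensitivity only yields, for each $\epsilon$, \emph{some} point with an $\epsilon$-modulus, not a single point working for all $\epsilon$; one then needs shift-commutation of $F$ (so that failure of sensitivity at scale $2^{-r}$ already produces an $r$-blocking word, and hence via your $(2)\Rightarrow(3)$ an actual equicontinuity point) to close the loop.

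Second, and more importantly, your suspicion about $(4)$ is well founded: as printed, condition $(4)$ is \emph{not} equivalent to the others. If there exists $k$ such that every word of length $2k+1$ is $r$-blocking, then every configuration contains blocking words with bounded gaps on both sides, so every point is equicontinuous and $F$ is equicontinuous. In K\r{u}rka's classification this condition characterises the equicontinuous cellular automata, not the merely non-sensitive ones; a surjective cellular automaton with equicontinuity points but not equicontinuous (precisely the class this paper studies) satisfies $(1)$--$(3)$ but not $(4)$. Thus the step you flagged as the main obstacle is in fact unprovable from the other hypotheses, and the proposition as printed appears to conflate two distinct results from the cited source; your compactness/pigeonhole attempt cannot succeed for this reason. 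Condition $(5)$, ``almost periodic'', is likewise ambiguous as written; the intended statement is presumably ``almost equicontinuous'' (the set of equicontinuity points is residual), which is exactly what your wall-density argument establishes.
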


Notice that every equicontinuous dynamical system on a zero dimensional
space is eventually periodic.

\begin{proposition}
Let $\left( A^{\mathbb{Z}},F\right) $ be a surjective cellular automaton of
radius $r$ with equicontinuity points. If $w$ is a $r-$blocking word; then
for any word $u$ between two occurrence of the word $w$ the sequel of the
word $u$ is eventually periodic.
\end{proposition}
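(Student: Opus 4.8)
The plan is to reduce the claim to the eventual periodicity of a single spatially periodic configuration, using the blocking word to confine everything to a finite window. I read ``the sequel of $u$'' as the sequence $(F^{n}(x))_{n\geq 0}$ read on the cells occupied by $u$, where $x$ is any configuration in which $w$ occurs at two positions $l_{1}<l_{2}$ and $u=x(l_{1}+|w|,l_{2}-1)$ is the word sitting between these occurrences. Let $p\in[0,|w|-r]$ be the offset given by the blocking property, so that the two walls, namely the coordinate blocks $W_{1}=[l_{1}+p,l_{1}+p+r]$ and $W_{2}=[l_{2}+p,l_{2}+p+r]$, carry under every iterate $F^{n}$ values that depend only on $w$ and not on the rest of the configuration. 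First I would replace $x$ by the spatially periodic configuration $\tilde{x}={}^{\infty}v^{\infty}$, where $v=x(l_{1},l_{2}-1)$, aligned so that a copy of $v$ starts at coordinate $l_{1}$; then $\tilde{x}$ has spatial period $l_{2}-l_{1}$, contains $w$ at $l_{1}$ and at $l_{2}$, and agrees with $x$ on the whole window $[l_{1},l_{2}+|w|-1]$, hence on both walls and on everything between them.

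Next I would observe that $\tilde{x}$ is eventually $F$-periodic. The set $P=\{z\in A^{\mathbb{Z}}:\sigma^{l_{2}-l_{1}}(z)=z\}$ of configurations whose spatial period divides $l_{2}-l_{1}$ is finite, of cardinality $(\#A)^{l_{2}-l_{1}}$, and it is $F$-invariant because $F$ commutes with $\sigma$. Any self-map of a finite set sends every point to an eventually periodic orbit, so $(F^{n}(\tilde{x}))_{n\geq 0}$ is eventually periodic; restricting to any fixed finite block of coordinates, its evolution there is eventually periodic in $n$ as well.

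The heart of the argument, and where the blocking hypothesis really intervenes, is to carry this conclusion from $\tilde{x}$ back to $x$ by showing that $F^{n}(x)$ and $F^{n}(\tilde{x})$ agree on the whole block $R=[l_{1}+p,l_{2}+p+r]$ for every $n\geq 0$. I would prove this by induction on $n$. The case $n=0$ holds because $x$ and $\tilde{x}$ agree on $R\subset[l_{1},l_{2}+|w|-1]$. For the inductive step I treat the walls and the interior separately. On $W_{1}$ and $W_{2}$ the two evolutions coincide at every time by the blocking property: after translating by $\sigma^{l_{1}}$ and $\sigma^{l_{2}}$ both configurations lie in $[w]$, so the blocked columns are determined by $w$ alone. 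For a coordinate $i$ strictly between the two walls, $F^{n+1}(\cdot)_{i}$ depends only on the coordinates in $[i-r,i+r]$ at time $n$, and this neighbourhood stays inside $R$ because each wall has width $r+1$; hence the induction hypothesis at time $n$ forces agreement at $i$ at time $n+1$. The delicate bookkeeping is exactly this: the walls must be pinned by blocking and wide enough (radius $r$) to stop the finite-speed spreading from carrying the external discrepancy between $x$ and $\tilde{x}$ into the interior, so I expect this isolation step to be the main obstacle.

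Since the cells of $u$ lie inside $R$, the two sequences $F^{n}(x)$ and $F^{n}(\tilde{x})$ read on $u$ are identical for every $n$, and the second is eventually periodic by the finite-set argument of the second paragraph; therefore the sequel of $u$ is eventually periodic, as claimed. Note that surjectivity is not actually needed for this eventual periodicity; it is the presence of two occurrences of the blocking word, confining the dynamics to a finite window, that makes the orbit fall into a cycle.
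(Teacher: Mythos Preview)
Your argument is correct and follows essentially the same route as the paper: replace $x$ by a spatially periodic configuration agreeing with it on the window between the two occurrences of $w$, use the blocking walls to show the evolutions coincide on that window for all times, and conclude via the eventual $F$-periodicity of any shift-periodic point. Your version is simply more explicit (the induction on $n$ is spelled out, and you correctly note that surjectivity plays no role), whereas the paper compresses the same reasoning into a couple of lines using the configuration $(wuw)^{\infty}$.
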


\begin{proof}
Let $w$ be a blocking word then there exist $p$ $\in \left[ 0,r-s\right] $
such that :%
\begin{equation*}
\forall x,y\in \left[ w\right] _{0},\forall n\geq 0,F^{n}\left( x\right)
\left( p,p+r\right) =F^{n}\left( y\right) \left( p,p+r\right) .
\end{equation*}%
For any word $u$ the word $wuw$ is also a blocking word.

Hence all elements from the cylinder $\left[ wuw\right] _{0}$ have same
images between coordinates $p+r$ and $\left\vert w\right\vert +\left\vert
u\right\vert +p+r.$ Let us consider the shift periodic point $\left(
wuw\right) ^{\infty }\in \left[ wuw\right] _{0},$which is eventually
periodic for $F$ consequently the sequence $\left\{ F^{k}\left( wuw\right)
\left( p,\left\vert w\right\vert +\left\vert u\right\vert +p+r\right) ,k\in 
\mathbb{N}\right\} $ is also eventually periodic.
\end{proof}

\begin{proposition}[\protect\cite{Kur97}]
Any equicontinuity point of a cellular automaton $\left( A^{\mathbb{Z}%
},F\right) $ has an occurrence of a blocking word. Conversely if there exist
blocking words, any point with infinitely many occurrences of a blocking
word to the left and right is an equicontinuity point; if moreover $X$ is
transitive for the shift equicontinuity points are dense in $A^{\mathbb{Z}}.$
\end{proposition}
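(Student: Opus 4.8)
The plan is to handle the three assertions separately, each time translating the metric notion of equicontinuity into the combinatorial statement that a central block of a configuration controls a central block of all of its iterates, which is precisely the content of a blocking word. Let $r$ be the radius of $F$. For the first assertion, let $x$ be an equicontinuity point and apply the definition with $\epsilon = 2^{-r}$ to obtain $\delta = 2^{-k}$ such that every $y$ with $d(x,y) < 2^{-k}$ satisfies $d(F^n(x), F^n(y)) < 2^{-r}$ for all $n \geq 0$. Unwinding the metric, this says that every $y$ agreeing with $x$ on $[-k,k]$ has $F^n(y)(-r,r) = F^n(x)(-r,r)$ for all $n$. I would then set $w = x(-k,k)$; since $F$ commutes with $\sigma$, the property ``the central block of width $2k+1$ fixes the central block of width $2r+1$ of every iterate'' is translation invariant, so $w$ is an $r$-blocking word (with blocking offset $k-r$), and it occurs in $x$ by construction. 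Thus the first claim follows directly from the definition.

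For the converse, fix a blocking word $w$ with blocking offset $p$, and let $x$ contain infinitely many occurrences of $w$ on each side of the origin. Given $\epsilon = 2^{-N}$, I would select one occurrence of $w$ lying entirely to the right of position $N$ and one entirely to the left of $-N$; these pin two windows $W_R$ and $W_L$ flanking the interval $[-N,N]$. I then set $\delta = 2^{-M}$ with $M$ large enough that agreement on $[-M,M]$ forces $y$ to carry the same two occurrences of $w$ at the same positions. By the defining property of a blocking word, $F^n(y)$ and $F^n(x)$ agree on $W_L$ and on $W_R$ for \emph{all} $n \geq 0$, regardless of the behaviour of $y$ outside $[-M,M]$. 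It then remains to propagate this agreement inward to $[-N,N]$.

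The heart of the matter, and the step I expect to be the main obstacle, is a shielding argument showing that the two pinned windows decouple the interior from the exterior for all time. I would prove by induction on $n$ that $F^n(y)$ and $F^n(x)$ agree on the whole interval $I$ bounded on the left by $W_L$ and on the right by $W_R$. The windows $W_L$ and $W_R$ are supplied directly by the blocking property rather than by the induction. For a position strictly between the windows, the value of $F^{n+1}$ there depends only on the time-$n$ values within radius $r$; the delicate point is that this dependency cone, when it reaches toward a flank, must land inside a pinned window rather than spill outside $I$. This is exactly where the width of the window furnished by the $r$-blocking property is used: a position adjacent to the inner edge of a window has its radius-$r$ cone land within that window precisely because the pinned window has width at least $r$. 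The bookkeeping of the offset $p$, the window width, and the radius $r$ must be carried out carefully to confirm that no information from outside $I$ ever enters. Granting the induction, $[-N,N] \subseteq I$ stays synchronized, so $d(F^n(x), F^n(y)) < \epsilon$ for all $n$, and $x$ is an equicontinuity point.

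For the density statement, assume the shift is transitive on $X$ and that blocking words exist. Given any configuration $z$ and any $N$, I would construct a configuration $y$ agreeing with $z$ on $[-N,N]$ and containing $w$ infinitely often both to the left and to the right: transitivity of the shift lets me extend the central block $z(-N,N)$ by repeatedly steering the orbit through cylinders of $w$ on both sides, and on the full shift one may simply insert copies of $w$ directly. By the converse just established, such a $y$ is an equicontinuity point, and $y$ lies in the prescribed cylinder around $z$. Since $z$ and $N$ were arbitrary, equicontinuity points are dense in $A^{\mathbb{Z}}$.
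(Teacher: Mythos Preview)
The paper does not supply its own proof of this proposition: it is quoted as background from K\r{u}rka \cite{Kur97} and is immediately followed by the next cited proposition, with no proof environment in between. There is therefore nothing in the paper to compare your argument against.

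That said, your proposal is the standard proof and is correct. The first part extracts a blocking word from the $\epsilon$--$\delta$ definition at $\epsilon=2^{-r}$; the converse is the usual shielding induction (two occurrences of an $r$-blocking word trap the central column because the radius-$r$ dependency cone of any interior cell at the next step lands either in the already-synchronized interior or inside one of the pinned windows); and density follows by planting copies of $w$ on both sides of an arbitrary central block, which shift-transitivity (and a fortiori the full shift) allows. One small remark on bookkeeping: with the paper's convention an $r$-blocking word pins a window of $r+1$ letters, while your argument produces a window of $2r+1$ letters; this is harmless since the larger window contains the required one, but you may want to align the indices with the paper's Definition when you write it up.
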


\begin{proposition}[\protect\cite{BT00}]
\label{AcEqut} Let $F$ be an equicontinuous surjective cellular automaton;
there exists then $p>0$ such that $F^{p}=Id$; in particular $F$ is bijective.
\end{proposition}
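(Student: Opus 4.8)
The plan is to combine the fact, recorded earlier, that an equicontinuous dynamical system on a zero-dimensional compact space is eventually periodic with the surjectivity hypothesis in order to kill the transient and obtain genuine periodicity.

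First I would note that $A^{\mathbb{Z}}$ is a Cantor space, hence zero dimensional and compact, and that compactness promotes equicontinuity to uniform equicontinuity. The noted consequence is that $(A^{\mathbb{Z}},F)$ is eventually periodic, i.e. there exist an integer $m\geq 0$ and a period $p>0$ such that $F^{m+p}=F^{m}$ as self-maps of $A^{\mathbb{Z}}$.

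The decisive step is to remove the pre-period $m$. Writing $F^{m+p}=F^{m}$ as $F^{p}\circ F^{m}=F^{m}$, we see that $F^{p}$ fixes every point of the image $F^{m}(A^{\mathbb{Z}})$. Since $F$ is surjective, each iterate $F^{m}$ is surjective as well, so $F^{m}(A^{\mathbb{Z}})=A^{\mathbb{Z}}$; hence $F^{p}(y)=y$ for every $y\in A^{\mathbb{Z}}$, that is $F^{p}=\mathrm{Id}$. Finally $F^{p}=\mathrm{Id}$ exhibits $F^{p-1}$ as a two-sided inverse of $F$, so $F$ is bijective.

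I expect no real obstacle beyond this surjectivity-promotion step, which is in fact the whole content of the statement: surjectivity forbids a nontrivial transient and collapses \emph{eventually periodic} to \emph{periodic}. Should one wish to avoid citing the eventual-periodicity remark, an alternative is to argue scale by scale: uniform equicontinuity makes the restriction of $F^{n}$ to coordinates $[-k,k]$ depend on only boundedly many input coordinates, and finiteness of $A$ forces this finite data to be eventually periodic in $n$, after which the same surjectivity argument applies. The abstract route above is, however, the cleaner one.
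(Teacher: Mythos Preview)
Your argument is correct and is essentially the standard proof of this fact: uniform equicontinuity forces the sequence $(F^{n})_{n\geq 0}$ to be eventually periodic, and surjectivity then cancels the pre-period. Note, however, that the present paper does not supply its own proof of this proposition; it is quoted from \cite{BT00} and stated without argument, so there is nothing in the paper to compare your approach against. For what it is worth, the Blanchard--Tisseur proof proceeds exactly along the lines you sketch (including your alternative ``scale by scale'' version): uniform equicontinuity gives a single bound $N$ such that every iterate $F^{n}$ has radius at most $N$, hence there are only finitely many possible maps $F^{n}$, yielding $F^{m+p}=F^{m}$, and surjectivity of $F^{m}$ then gives $F^{p}=\mathrm{Id}$.

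One small caveat: the remark you invoke, ``every equicontinuous dynamical system on a zero dimensional space is eventually periodic'', is not literally true for arbitrary systems (an odometer is a counterexample); it is the shift-commutation of a cellular automaton that makes the finiteness argument go through. Your alternative sketch already contains the correct CA-specific justification, so the proof stands.
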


\subsubsection{Gilman's classification}

Based on the Wolfram's work \cite{Wol84}, Gilman \cite{Gil87}\cite{Gil88}
introduced a measure theoretic classification using Bernoulli measures. We
will use only topological part of this classification one can refer to the
references given above for a more deep insight into this classification.

Let $F$ be a cellular automaton. For any interval $\left[ i_{1},i_{2}\right] 
$ the relation $\mathfrak{R}$ defined by $x\mathfrak{R}y$ if and only if $%
\forall j:F^{j}\left( x\right) \left( i_{1},i_{2}\right) =F^{j}\left(
y\right) \left( i_{1},i_{2}\right) $ is an equivalence relation.

The equivalence class of a point $x$ will be denoted by : 
\begin{equation*}
B_{\left[ i_{1},i_{2}\right] }\left( x\right) =\left\{ y\in A^{\mathbb{Z}%
},\forall j:F^{j}\left( x\right) \left( i_{1},i_{2}\right) =F^{j}\left(
y\right) \left( i_{1},i_{2}\right) \right\} .
\end{equation*}

If we visualize the behavior of ($f$ with argument) $x$ as an array with
entry $\left( a_{ij}\right) $ with entry $a_{ij}=\left( f^{i}\left( x\right)
\right) \left( j\right) $ in row $i$ and column $j,$ then $B_{\left[
i_{1},i_{2}\right] }\left( x\right) $ is the set of all $y$ whose behavior
agrees with that of $x$ on infinite vertical strip under the interval $\left[
i_{1},i_{2}\right] .$ More generally we will say that $y$ has behavior $B_{%
\left[ i_{1},i_{2}\right] }\left( x\right) $ on $\left[ i_{1}+k,i_{2}+k%
\right] $ if $\sigma ^{-k}\left( y\right) \in B_{\left[ i_{1},i_{2}\right]
}\left( x\right) .$

\begin{lemma}[Gilman ]
If $x\in A^{\mathbb{Z}}$ and $n\in \mathbb{N}^{\ast }$ then\newline
$\left( i\right) $ The set $B_{\left[ -n,n\right] }\left( x\right) $ is
closed.\newline
$\left( ii\right) $ We have $F\left( B_{\left[ -n,n\right] }\left( x\right)
\right) \subset B_{\left[ -n,n\right] }\left( F\left( x\right) \right) .$%
\newline
$\left( iii\right) $ $F$ is equicontinuous at $x$ if and only if $x\in B_{%
\left[ -m,m\right] }\left( x\right) ^{\circ }$ for all $m\in \mathbb{N}%
^{\ast }.$\newline
$\left( iv\right) $ The restriction of $F$ to $\overline{O_{x}}$ is
equicontinuous if and only if $B_{\left[ -m,m\right] }\left( x\right) $ is
ultimately periodic for all $m\in \mathbb{N}^{\ast }.$
\end{lemma}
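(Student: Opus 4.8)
The plan is to treat the four parts in order, the first three being essentially unwindings of the definitions and the fourth carrying the real content.

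For $(i)$ and $(ii)$ I would write $B_{[-n,n]}(x)=\bigcap_{j\geq 0}\{y\in A^{\mathbb{Z}}:F^{j}(y)(-n,n)=F^{j}(x)(-n,n)\}$. Each set in this intersection is the preimage of a single element of the finite discrete space $A^{2n+1}$ under the continuous map $y\mapsto F^{j}(y)(-n,n)$, hence is clopen; an intersection of closed sets is closed, giving $(i)$. For $(ii)$, if $y\in B_{[-n,n]}(x)$ then $F^{k}(y)(-n,n)=F^{k}(x)(-n,n)$ for every $k\geq 0$; reading this off at $k=j+1$ for each $j\geq 0$ says precisely $F^{j}(F(y))(-n,n)=F^{j}(F(x))(-n,n)$, i.e. $F(y)\in B_{[-n,n]}(F(x))$.

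For $(iii)$ I would first record that, in the given metric, $d(u,v)<2^{-m}$ holds exactly when $u$ and $v$ agree on the window $[-m,m]$. Equicontinuity of $F$ at $x$ with $\epsilon=2^{-m}$ then asserts the existence of $k$ such that every $y$ in the cylinder neighbourhood $[x(-k,k)]_{-k}$ satisfies $F^{n}(y)(-m,m)=F^{n}(x)(-m,m)$ for all $n\geq 0$; but the set of such $y$ is exactly $B_{[-m,m]}(x)$, so this says the open cylinder $[x(-k,k)]_{-k}\ni x$ lies in $B_{[-m,m]}(x)$, i.e. $x\in B_{[-m,m]}(x)^{\circ}$. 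Since these cylinders form a neighbourhood basis at $x$, the implication reverses verbatim, and letting $m$ range over $\mathbb{N}^{\ast}$ gives the stated equivalence.

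Part $(iv)$ is where the work lies; I read "$B_{[-m,m]}(x)$ is ultimately periodic" as the statement that the trace sequence $j\mapsto F^{j}(x)(-m,m)$ is an ultimately periodic element of $(A^{2m+1})^{\mathbb{N}}$. The key device is the map $\phi_{m}(y)=(F^{j}(y)(-m,m))_{j\geq 0}$, which is continuous and satisfies $\phi_{m}\circ F=S\circ\phi_{m}$ for the one-sided shift $S$, so it is a factor map from $(\overline{O_{x}},F)$ onto $Y_{m}:=\phi_{m}(\overline{O_{x}})=\overline{\{S^{n}\phi_{m}(x):n\geq 0\}}$; moreover $d(F^{n}y,F^{n}z)<2^{-m}$ for all $n$ is equivalent to $\phi_{m}(y)=\phi_{m}(z)$. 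For the direction assuming ultimate periodicity, the forward $S$-orbit of $\phi_{m}(x)$ is then finite, so $Y_{m}$ is a finite discrete space; a continuous map from the compact space $\overline{O_{x}}$ to a finite discrete space is uniformly locally constant, producing $\delta>0$ with $d(y,z)<\delta\Rightarrow\phi_{m}(y)=\phi_{m}(z)$, which is equicontinuity at scale $2^{-m}$, and as $m$ is arbitrary $F$ is equicontinuous on $\overline{O_{x}}$. Conversely, if $F$ is equicontinuous on the compact set $\overline{O_{x}}$ it is uniformly equicontinuous there, so for $\epsilon=2^{-m}$ there is $K$ with the property that any two points of $\overline{O_{x}}$ agreeing on $[-K,K]$ have all $F$-iterates agreeing on $[-m,m]$; among the orbit points $F^{k}(x)$ only finitely many windows $F^{k}(x)(-K,K)$ occur, so $F^{k_{1}}(x)(-K,K)=F^{k_{2}}(x)(-K,K)$ for some $k_{1}<k_{2}$, and uniform equicontinuity then forces $F^{k_{1}+n}(x)(-m,m)=F^{k_{2}+n}(x)(-m,m)$ for all $n\geq 0$, i.e. the $m$-trace is ultimately periodic with period $k_{2}-k_{1}$.

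The routine parts are $(i)$--$(iii)$; the main obstacle is $(iv)$, and specifically getting the interface right between the metric notion of equicontinuity of the subsystem $(\overline{O_{x}},F)$ and the purely combinatorial trace description. The factor map $\phi_{m}$ onto a one-sided shift is the clean bridge: ultimate periodicity of the trace is exactly finiteness of the image $Y_{m}$, and finiteness of $Y_{m}$ is exactly uniform local constancy of $\phi_{m}$, which is equicontinuity at scale $2^{-m}$. The one point demanding care is that everything in $(iv)$ must be carried out inside $\overline{O_{x}}$ rather than on all of $A^{\mathbb{Z}}$, so the pigeonhole and the finite-image arguments are applied to orbit points only.
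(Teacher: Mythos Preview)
The paper does not supply a proof of this lemma at all: it is stated with the attribution ``Gilman'' and imported wholesale from \cite{Gil87,Gil88} as background, so there is no in-paper argument to compare against. Your proposal therefore has to be judged on its own merits.

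Parts $(i)$--$(iii)$ are handled correctly and in the standard way; there is nothing to add.

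For part $(iv)$ your argument is sound. The factor map $\phi_{m}:\overline{O_{x}}\to (A^{2m+1})^{\mathbb{N}}$ intertwining $F$ with the one-sided shift is the natural device, and you use it cleanly in both directions. Two small points worth tightening in the write-up: first, when you argue that $Y_{m}$ is finite, you are implicitly using that $\phi_{m}(\overline{O_{x}})=\overline{\{S^{n}\phi_{m}(x):n\geq 0\}}$, which follows from continuity of $\phi_{m}$ and compactness of $\overline{O_{x}}$; it would not hurt to say so explicitly. Second, in the converse direction you should make clear that the $\delta$ produced by uniform equicontinuity is only guaranteed to work for pairs $y,z\in\overline{O_{x}}$, and that this is exactly how you use it (on the orbit points $F^{k_{1}}(x),F^{k_{2}}(x)$). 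You do flag this concern in your closing remarks, but the body of the argument should reflect it. With those cosmetic adjustments the proof is complete.
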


\begin{proposition}[Gilman]
Let $F$ be a cellular automaton of radius $r.$The following are equivalent 
\newline
$\left( i\right) $ $F$ is equicontinuous at some point $x\in A^{\mathbb{Z}}.$%
\newline
$\left( ii\right) $ For some $n\geq \frac{r-1}{2}$ there is a class $B_{%
\left[ -n,n\right] }\left( x\right) $ with $B_{\left[ -n,n\right] }\left(
x\right) ^{\circ }\neq \varnothing .$\newline
$\left( iii\right) $ For all $n\geq 0$ there is class $B_{\left[ -n,n\right]
}\left( x\right) $ with $B_{\left[ -n,n\right] }\left( x\right) ^{\circ
}\neq \varnothing .$
\end{proposition}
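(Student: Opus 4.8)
The plan is to establish the cycle $(i)\Rightarrow(iii)\Rightarrow(ii)\Rightarrow(i)$, since the first two links are almost immediate and all of the real work lies in the third. For $(i)\Rightarrow(iii)$ I would quote part $(iii)$ of the preceding Gilman lemma: equicontinuity of $F$ at $x$ gives $x\in B_{[-m,m]}(x)^{\circ}$ for every $m$, so in particular every class $B_{[-n,n]}(x)$ contains $x$ in its interior and hence has nonempty interior, which is exactly $(iii)$ (for $n=0$ one uses $B_{[-1,1]}(x)\subseteq B_{[0,0]}(x)$). The implication $(iii)\Rightarrow(ii)$ is a mere specialization: $(iii)$ produces a class with nonempty interior for every $n\ge 0$, hence in particular for any $n\ge (r-1)/2$, which is what $(ii)$ demands.

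For $(ii)\Rightarrow(i)$ the idea is to turn the class of nonempty interior into a blocking word and then appeal to Kurka's proposition. Assume $B_{[-n,n]}(x)^{\circ}\neq\varnothing$ with $n\ge (r-1)/2$, and choose a cylinder $[u]_{l}\subseteq B_{[-n,n]}(x)$ witnessing this. Reading the defining condition of the class at time $0$ gives $B_{[-n,n]}(x)\subseteq [x(-n,n)]_{-n}$, hence $[u]_{l}\subseteq [x(-n,n)]_{-n}$; since two cylinders are nested or disjoint, the support $[l,l+|u|-1]$ of $u$ must contain the window $[-n,n]$ and $u$ must agree there with $x$. Now every $y\in[u]_{l}$ belongs to the class, so $F^{j}(y)(-n,n)=F^{j}(x)(-n,n)$ for all $j\ge 0$; in other words the values of $F^{j}$ on the window $[-n,n]$ are frozen for all points sharing the word $u$. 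This is precisely the assertion that $u$ is a blocking word whose blocked window $[-n,n]$ sits inside its support.

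The threshold $n\ge (r-1)/2$ enters exactly here: it is equivalent to $2n+1\ge r$, so the frozen window is at least as wide as the radius and therefore large enough to serve as (or to contain) an $r$-blocking window. Restricting the blocked window to width $r$ exhibits $u$ as an $r$-blocking word, and the equivalence of conditions $2$ and $3$ in the Kurka proposition then guarantees that $F$ has an equicontinuity point, giving $(i)$; the commutation $F\sigma=\sigma F$ is what lets me translate $u$ to position $0$ so that the blocking-word definition applies literally. I expect the one genuinely delicate point to be this width bookkeeping---checking that the admissible offset lands in $[0,|u|-r]$ and that $2n+1\ge r$ is the exact threshold for the wall to be impassable---since a frozen strip narrower than the radius could not stop the local rule from carrying information across it, and it is this phenomenon that the bound $n\ge (r-1)/2$ is designed to exclude.
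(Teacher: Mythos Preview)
The paper does not supply its own proof of this proposition; it is quoted without proof as a background result attributed to Gilman. There is therefore nothing in the paper to compare your argument against.

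That said, your cycle $(i)\Rightarrow(iii)\Rightarrow(ii)\Rightarrow(i)$ is correct. The first arrow follows immediately from part $(iii)$ of the preceding Gilman lemma, and the second is a specialization. For $(ii)\Rightarrow(i)$ your extraction of a cylinder $[u]_{l}\subseteq B_{[-n,n]}(x)$, the use of the non-archimedean property to see that the support of $u$ covers $[-n,n]$, and the conclusion that $u$ is a $(2n+1)$-blocking word (hence an $r$-blocking word once $2n+1\geq r$) are all sound; the shift-commutation remark is exactly what is needed to place $u$ at position $0$ and verify that the offset $p$ lands in $[0,|u|-r]$. Invoking K\r{u}rka's proposition then yields an equicontinuity point, which is precisely $(i)$. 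One minor historical note: Gilman's original argument predates K\r{u}rka's blocking-word characterization and would have proceeded more directly, but since the paper has already recorded K\r{u}rka's proposition in its background section, your route through it is entirely legitimate here.
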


\subsection{Periodic points of cellular automata}

Let $(A^{\mathbb{Z}},F)$ be a cellular automaton. By commutation with the
shift, every shift-periodic point is $F-$eventually periodic hence the set
of eventually periodic points is dense.

Shift periodic points are called spatially periodic and periodic points of a
cellular automata that are not shift periodic are called strictly temporally
periodic.

Boyle and Kitchens \cite{BK99} showed that closing cellular automata have a
dense set of periodic points. The same result was obtained by Blanchard and
Tisseur \cite{BT00} for surjective cellular automata with equicontinuity
points.

Acerbi, Dennunzio and Formenti \cite{ADF09} showed that if every mixing
cellular automaton has a dense set of periodic points then every cellular
automaton has a dense set of periodic points.

The question whatever a surjective cellular automaton has a dense set of
periodic points is still an open problem.

\section{Results}

This section is for new results. We will focus on the set of periodic
factors of a surjective cellular automaton with equicontinuous points but
without being equicontinuous.

This set contain the set of factors that are surjective equicontinuous
cellular automata. We classify periodic factors according to their periods

\begin{definition}
We define the equivalence relation between two periodic dynamical systems $%
\left( X,F\right) $ and $\left( Y,G\right) $ as : $F\sim G$ if and only iff $%
F$ and $G$ have the same period.
\end{definition}

We will denote by $\widetilde{\mathit{p}}$ the equivalence class of periodic
dynamical systems with period $p.$

We will show that a cellular automaton with equicontinuous points but
without being equicontinuous has an infinity of classes of periodic factors.

The following lemma and its proof is a topological version of an ergodic
result show by Gilman \cite{Gil88}.

\begin{lemma}
\label{Gil}Let $(A^{\mathbb{Z}},F)$ be a cellular automaton or radius $r$,
If $F$ has an equicontinuity point $x$ then the sequence of words $\left(
F^{i}\left( x\right) \left( -r,r\right) \right) _{i\geq 0}$ is eventually
periodic.
\end{lemma}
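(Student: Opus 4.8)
The plan is to reduce the $[-r,r]$-behaviour of $x$ to that of a spatially periodic configuration, for which eventual periodicity is automatic. Notably, this route uses neither surjectivity nor the blocking-word machinery, only the definition of an equicontinuity point together with the elementary fact recalled above that every shift-periodic point is $F$-eventually periodic.

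First I would exploit equicontinuity at $x$ to localize its central trajectory inside a finite window. Applying the definition of an equicontinuity point with $\epsilon = 2^{-r}$ yields some $\delta>0$; choosing $k$ large enough that $2^{-k}<\delta$, any configuration $y$ with $y(-k,k)=x(-k,k)$ satisfies $d(x,y)\leq 2^{-(k+1)}<\delta$, hence $d\left(F^{n}(x),F^{n}(y)\right)<2^{-r}$ for all $n\geq 0$. Unwinding the definition of the metric $d$, this says precisely
\[
F^{n}(x)(-r,r)=F^{n}(y)(-r,r)\qquad\text{for all }n\geq 0 .
\]
Thus the finite word $v=x(-k,k)$ already determines the whole sequence $\left(F^{i}(x)(-r,r)\right)_{i\geq 0}$.

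Next I would produce a spatially periodic witness sharing this window. Take the word $v=x(-k,k)$ of length $2k+1$ and let $z={}^{\infty}v^{\infty}$ be aligned so that $z(-k,k)=v=x(-k,k)$. Since $z$ is shift periodic, it is $F$-eventually periodic, so there are integers $m\geq 0$ and $q>0$ with $F^{m+q}(z)=F^{m}(z)$; consequently $F^{i+q}(z)=F^{i}(z)$ for every $i\geq m$, and the configuration sequence $\left(F^{i}(z)\right)_{i\geq 0}$ is eventually periodic.

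Finally I would combine the two steps. Because $z(-k,k)=x(-k,k)$, the localization of the first step gives $F^{i}(x)(-r,r)=F^{i}(z)(-r,r)$ for all $i\geq 0$. The right-hand sequence inherits eventual periodicity (pre-period $m$, period dividing $q$) from $\left(F^{i}(z)\right)_{i\geq 0}$, and therefore so does $\left(F^{i}(x)(-r,r)\right)_{i\geq 0}$, as claimed. The one delicate point is the first step: one must verify that equicontinuity at $x$, which a priori only controls arbitrarily small perturbations, genuinely collapses to the single finite cylinder condition $y(-k,k)=x(-k,k)$ that $z$ can be built to satisfy. Once the non-archimedean metric is unwound as above, this collapse is routine, and the rest of the argument is purely formal.
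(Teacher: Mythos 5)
Your proof is correct, and it takes a genuinely different and more elementary route than the paper's. The paper works through Gilman's equivalence classes: it uses the fact that equicontinuity at $x$ forces $B_{\left[ -r,r\right] }\left( x\right) ^{\circ }\neq \varnothing$, invokes mixing of the shift to find a point whose $\left[ -r,r\right]$-behaviour recurs at a translate $\left[ -r+p,r+p\right]$, and then builds a sequence of configurations converging to a spatially periodic point $\left( uw\right) ^{\infty }$ lying in the closed set $B_{\left[ -r,r\right] }\left( x\right)$; eventual periodicity of the central column then follows because spatially periodic points are $F$-eventually periodic. You short-circuit all of this: unwinding the definition of an equicontinuity point in the non-archimedean metric with $\epsilon =2^{-r}$ shows directly that the entire column $\left( F^{i}\left( x\right) \left( -r,r\right) \right) _{i\geq 0}$ is determined by the finite window $x\left( -k,k\right)$, so the spatially periodic configuration $^{\infty }v^{\infty }$ built from $v=x\left( -k,k\right)$ (suitably aligned) has the same column, and its $F$-orbit lives in the finite, $F$-invariant set of configurations fixed by $\sigma ^{2k+1}$, hence is eventually periodic. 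Both arguments ultimately rest on transferring the column to a spatially periodic witness, but yours reaches that witness in one step and avoids the closedness of $B_{\left[ -r,r\right] }\left( x\right)$, the mixing of the shift, and the delicate limiting construction (whose induction in the paper is in fact rather garbled). What the paper's route buys is the explicit connection to Gilman's classification, which the author advertises as the point of the lemma, but it is not needed for the statement itself; your argument also makes transparent that neither surjectivity nor blocking-word machinery is required.
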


\begin{proof}
Let $r$ be the radius of the cellular automaton and let $x$ be an
equicontinuous point: Then we have $B_{\left[ -r,r\right] }\left( x\right)
^{\circ }\neq \varnothing $.\newline
As the shift is mixing there exist an integer $p>0$ such that :%
\begin{equation*}
\sigma ^{-p}\left( B_{\left[ -r,r\right] }\left( x\right) \right) \cap
\left( B_{\left[ -r,r\right] }\left( x\right) \right) \neq \varnothing .
\end{equation*}

For any $y\in \sigma ^{-p}\left( B_{\left[ -r,r\right] }\left( x\right)
\right) \cap B_{\left[ -r,r\right] }\left( x\right) $ we have:%
\begin{equation*}
\forall i\geq 0:F^{i}\left( y\right) \left( -r,r\right) =F^{i}\left(
y\right) \left( -r+p,r+p\right) .
\end{equation*}

Let us denote $y\left( -r,r\right) =w$ and by $u$ the word between two
occurrences of $w$. Let us also denote by $y^{-}$ the part at left of the
first word $w$ and $y^{+}$ that's at right of the second $w.$

By incorporating the word $uw$ in $y$ between the words $w$ and $u$ we
obtain a new element $y^{\left( 1\right) }$ containing two occurrences of
the word $uw.$ By repeating this process we obtain a sequence $y^{\left(
i\right) }$ containing at each iteration one more occurrence of the word $uw.
$

By recurrence under the $F-$ action we can show that that $y^{\left(
i\right) }$ still belongs to $B_{\left[ -ij_{1}-\left( i+1\right)
r,ij_{1}+\left( i+1\right) r\right] }\left( x\right) $ for any $i>0.$ 
\newline
For $i=1$ we show that $y^{\left( 1\right) }\in B_{\left[ -j_{1}+2r,j_{1}+2r%
\right] }\left( x\right) .$\newline
As images of the word $y^{\left( 1\right) }\left( j_{1},j_{1}+2r\right) =w$
are independent at right from the infinite column under the word $y^{+}$ and
at left from the infinite column under $wu$. From another side images of the
word $y^{\left( 1\right) }\left( -j_{1}-2r,-j_{1}\right) =w$ depend at left
from the infinite column under $y^{-}$ and at right from the column under $%
wu $.

From the definition of the set $B_{\left[ -r,r\right] }\left( x\right) ,$
images of $x\left( -r,r\right) =w$ are independent from the behavior at left
and right of $w.$So we have : 
\begin{equation*}
\forall i:F^{i}\left( y^{\left( 1\right) }\right) \left( -r,r\right)
=F^{i}\left( y^{\left( 1\right) }\right) \left( -r,r\right) .
\end{equation*}%
\newline
Then we have: $y^{\left( 1\right) }\in B_{\left[ -j_{1}+2r,j_{1}+2r\right]
}\left( x\right) .$

Suppose now that $y^{\left( i\right) }\in B_{\left[ -ij_{1}-\left(
i+1\right) r,ij_{1}+\left( i+1\right) r\right] }\left( x\right) $ we want to
show that $y^{\left( i+1\right) }\in B_{\left[ -\left( i+1\right)
j_{1}-\left( i+2\right) r,\left( i\right) i+1j_{1}+\left( i+2\right) r\right]
}\left( x\right) .$\newline
Same arguments as step $i=1$ lead to conclude that images of the word $%
y^{\left( i\right) }\left( ij_{1},ij_{1}+\left( i+1\right) r\right) =w$
depends at right from the infinite word $y^{+}$ and at left from the word $%
wu $ and from the infinite column below. On another hand images of the word $%
y^{\left( 1\right) }\left( -ij_{1},-ij_{1}-\left( i+1\right) r\right) =w$
depends at left from the infinite word $y^{-}$ and at right from $wu$ and
the infinite column below.\newline
As each element $y^{\left( i\right) }$ belongs to $B_{\left[ -ij_{1}-\left(
i+1\right) r,ij_{1}+\left( i+1\right) r\right] }\left( x\right) $ it belongs
also to $B_{\left[ -r,r\right] }\left( x\right) .$

The sequence of configurations $y^{\left( i\right) }=y^{-}w\underset{%
\leftarrow \text{ }\left( i-1\right) \text{ }\rightarrow }{uw...uw}y^{+}$
containing at each step a one more occurrence of the word $uw$ converge to
the shift periodic configuration $\left( uw\right) ^{\infty }$ which share
with $y$ the same coordinates over $\left( -r,r\right) $.\newline
As the set $B_{\left[ -r,r\right] }\left( x\right) $ is closed the periodic
point $\left( uw\right) ^{\infty }$ is in $B_{\left[ -r,r\right] }\left(
x\right) ,$ the sequence of words $\left( F^{i}\left( x\right) \left(
-r,r\right) \right) _{i\geq 0}$ is then eventually periodic.
\end{proof}

\begin{proposition}
Let $(A^{\mathbb{Z}},F)$ be a cellular automaton with equicontinuous points
then $F$ has as factor at least a periodic factor.
\end{proposition}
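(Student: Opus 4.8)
The plan is to reduce the statement to producing a single \emph{eventually} periodic factor and then to pass to its eventual image. Suppose we already have a factor map $\pi :(A^{\mathbb{Z}},F)\rightarrow (Y,U)$ with $(Y,U)$ eventually periodic, say $U^{m+p}=U^{m}$ for some $m\geq 0$ and $p>0$. Put $Y'=U^{m}(Y)$ and $\pi '=U^{m}\circ \pi $. Then $Y'$ is compact, $\pi '$ is continuous and onto $Y'$, and since $\pi '\circ F=U^{m}\circ U\circ \pi =U\circ (U^{m}\circ \pi )=U\circ \pi '$ it is again a factor map. Moreover, for $y'=U^{m}(y)$ we have $U^{p}(y')=U^{m+p}(y)=U^{m}(y)=y'$, so $U^{p}|_{Y'}=\mathrm{Id}$ and $(Y',U|_{Y'})$ is periodic. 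Thus $(Y',U|_{Y'})$ is a periodic factor of $(A^{\mathbb{Z}},F)$, and it remains only to exhibit one eventually periodic factor. Note that this reduction uses nothing but the existence of $\pi $, so it is insensitive to whether $F$ is surjective, which matters since the proposition makes no surjectivity assumption.

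For the eventually periodic factor I would take $(Y,U)$ to be the maximal equicontinuous factor furnished by the proposition on its existence in Section~1. Because $F$ has an equicontinuity point, Lemma~\ref{Gil} shows that the central column $\left( F^{i}(x)(-r,r)\right) _{i\geq 0}$ is eventually periodic at every equicontinuity point $x$, and by the proposition of \cite{Kur97} such points are dense in $A^{\mathbb{Z}}$ (the shift being transitive). The intended mechanism is that this dense family of eventually periodic columns leaves the equicontinuous factor no room for genuinely aperiodic behaviour: invoking the remark following Kurka's proposition, an equicontinuous dynamical system carried on a zero-dimensional space is eventually periodic, so once $(Y,U)$ is recognised as such a system the reduction above yields the periodic factor.

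The main obstacle is precisely this last step: upgrading the \emph{pointwise} eventual periodicity of Lemma~\ref{Gil} to eventual periodicity of the \emph{whole} factor. Concretely one must (i) bound the pre-period $m$ and period $p$ of the central columns \emph{uniformly} over the relevant dense set, and (ii) check that the maximal equicontinuous factor is supported on a zero-dimensional space so that the cited remark applies. I expect (i) to be the delicate point, since a priori distinct equicontinuity points may produce distinct periods; the device that should force a common period is the blocking-word characterisation of \cite{Kur03}, namely that every word of length $2k+1$ is $r$-blocking, so that the evolution on the two sides of a blocking word is decoupled and the local periods can be recombined into one period governing the factor. If, instead, the maximal equicontinuous factor fails to be zero-dimensional, the same eventual-image construction would have to be applied to a more explicitly built eventually periodic factor --- for instance the one obtained from the eventually periodic central-column (trace) map, after trimming it down to its eventually periodic part --- but in either route the crux remains the passage from the per-point statement of Lemma~\ref{Gil} to a single global period.
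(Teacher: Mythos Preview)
Your opening reduction --- from an eventually periodic factor to a genuinely periodic one by passing to the eventual image $Y'=U^{m}(Y)$ --- is correct and cleanly stated. The difficulty, as you yourself flag, is entirely in manufacturing the eventually periodic factor, and here your chosen route through the maximal equicontinuous factor runs into real obstacles that the paper simply sidesteps.

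The gap is genuine. You invoke the fact that an equicontinuous system on a zero-dimensional space is eventually periodic, but nothing in the paper (or in general) guarantees that the maximal equicontinuous factor of $(A^{\mathbb{Z}},F)$ lives on a zero-dimensional space: continuous images of the Cantor set can be arbitrary compact metric spaces, so this step needs an argument you do not supply. Your proposed fix via item~(i), bounding the pre-periods and periods of the central columns \emph{uniformly}, is likewise not established, and your appeal to condition~4 of K\r{u}rka's proposition is misplaced --- that condition (every long enough word is blocking) characterises \emph{equicontinuous} CA, not merely almost-equicontinuous ones, so you cannot use it here without assuming what is to be proved in the later proposition.

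The paper's proof is far more elementary and avoids the MEF entirely. It works with a \emph{single} equicontinuity point $x$: Lemma~\ref{Gil} gives that the central column $\bigl(F^{i}(x)(-r,r)\bigr)_{i\geq 0}$ is eventually periodic with some pre-period $m$ and period $p$. Setting $W_{k}=[F^{k}(x)(-r,r)]$ and $W=\bigcup_{k=m}^{m+p-1}W_{k}$, the paper defines $\pi:W\to\mathbb{Z}/p\mathbb{Z}$ by $\pi(y)=k-m$ for $y\in W_{k}$, and checks that this intertwines $F$ with the rotation $z\mapsto z+1$ on $\mathbb{Z}/p\mathbb{Z}$. No uniform period bound, no zero-dimensionality of a quotient, no global trace map --- just one eventually periodic column and the finite cyclic group it determines. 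Your fallback suggestion (the ``trace'' construction trimmed to its eventually periodic part) is in spirit much closer to this, and had you developed that line rather than the MEF line you would have landed essentially on the paper's argument. One caveat worth noting about the paper's own proof: the map $\pi$ it writes down is only defined on the subset $W$, so strictly speaking it exhibits a periodic factor of the restricted system $(W,F|_{W})$ rather than of the full $(A^{\mathbb{Z}},F)$; you may wish to keep your eventual-image reduction handy to tidy this up.
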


\begin{proof}
As $F$ has some equicontinuous point then there is a class $B_{\left[ -n,n%
\right] }\left( x\right) $ with $B_{\left[ -n,n\right] }\left( x\right)
^{\circ }\neq \varnothing .$\newline

By lemma \ref{Gil} the sequence of words $\left( F^{i}\left( x\right) \left(
-r,r\right) \right) _{i\geq 0}$ is eventually periodic, then there exist a
preperiod $m$ and a period $p$ such that for all $i\in \mathbb{N}:$ 
\begin{equation*}
F^{m}\left( x\right) \left( -r,r\right) ^{\infty }=F^{m+ip}\left( x\right)
\left( -r,r\right) ^{\infty }.
\end{equation*}%
\newline
Let us define the family of sets $W_{k}=\left[ F^{k}\left( x\right) \left(
-r,r\right) \right] $ with $0\leq k\leq m+p-1.$

Consider the set $W=\cup _{m}^{m+p-1}W_{k}$ we will show that the
restriction of $F$ to $W$ has a periodic factor.

Let the function $\pi $ defined from $W$ to $\mathbb{Z}/p\mathbb{Z}$ by :%
\begin{equation*}
\pi \left( x\right) =k-m:x\in W_{k}:m\leq k\leq m+p-1
\end{equation*}%
Define the periodic dynamical systems $\left( \mathbb{Z}/p\mathbb{Z},\left(
x+1\right) \func{mod}p\right) ,$ we have 
\begin{equation*}
\pi \left( F\left( x\right) \right) =\left\{ 
\begin{array}{l}
k-m+1\text{ if }x\in W_{k}\text{ and }m\leq k\leq m+p-2 \\ 
1\text{if }x\in W_{m+p-1}%
\end{array}%
\right. =P\left( \pi \left( x\right) \right)
\end{equation*}%
Thus $\left( \mathbb{Z}/p\mathbb{Z},\left( x+1\right) \func{mod}p\right) $
is a periodic factor of $\left( W,F\right) .$
\end{proof}

\begin{example}
Non surjective CA with equicontinuous points \newline
$\left( \mathbf{3}^{\mathbb{Z}},F\right) $ where $\mathbf{3}=\left\{
w,0,r\right\} $ the rules are defined by 
\begin{equation*}
\begin{tabular}{|l|l|l|l|l|l|l|l|l|l|}
\hline
$x_{i-1}x_{i}$ & $wr$ & $w0$ & $ww$ & $rr$ & $r0$ & $rw$ & $0r$ & $00$ & $0w$
\\ \hline
$F\left( x\right) _{i}$ & ~~$r$ & ~~$0$ & ~~$w$ & ~$0$ & ~$r$ & ~$w$ & ~$0$
& ~$0$ & ~$w$ \\ \hline
\end{tabular}%
\end{equation*}%
Notice that there is no word of lentgh 3 such that we can obtain the word $%
rr $ so the cellular automaton is not surjective.\newline
We have also : 
\begin{equation*}
F\left( \left[ w000w\right] \right) =F\left( \left[ w00rw\right] \right) =%
\left[ w000w\right]
\end{equation*}%
\newline
Hence $\left( \mathbf{3}^{\mathbb{Z}},F\right) $ is not injective.\newline
Consider the cylinder $\left[ wr000w\right] $ we have 
\begin{equation*}
\begin{tabular}{l|l|llllll|l}
$x$ & $...$ & $w$ & $r$ & $0$ & $0$ & $0$ & $w$ & $...$ \\ 
$F\left( x\right) $ & $...$ & $w$ & $r$ & $r$ & $0$ & $0$ & $w$ & $...$ \\ 
$F^{2}\left( x\right) $ & $...$ & $w$ & $r$ & $0$ & $r$ & $0$ & $w$ & $...$
\\ 
$F^{3}\left( x\right) $ & $...$ & $w$ & $r$ & $r$ & $0$ & $r$ & $w$ & $...$
\\ 
$F^{4}\left( x\right) $ & $...$ & $w$ & $r$ & $0$ & $r$ & $0$ & $w$ & $...$%
\end{tabular}%
\end{equation*}%
\newline
Denote by $W=\left[ wr0r0w\right] \cup \left[ wrr0rw\right] $ then the
restriction of $F$ to the set $W$ has a periodic factor.
\end{example}

The following lemma is a particular case of a result shown by Tisseur \cite%
{Tis08}.

\begin{lemma}
Let $(A^{\mathbb{Z}},F)$ be a surjective cellular automaton with some
equicontinuous point $x.$ If we denote by $r$ the radius of the cellular
automaton then every point $y\in A^{\mathbb{Z}}$ is a limit of the sequence
of the $F$-periodic points of the form $\left( x_{\left[ -r,r\right] }y_{%
\left[ -n,n\right] }x_{\left[ -r,r\right] }\right) ^{\infty }.$
\end{lemma}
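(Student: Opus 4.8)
The plan is to exhibit, for each $n$, the spatially periodic configuration $z_n := (x_{[-r,r]}\,y_{[-n,n]}\,x_{[-r,r]})^{\infty}$ as a genuine $F$-periodic point, and then to observe that $z_n\to y$. First I would record that $w:=x_{[-r,r]}$ is an $r$-blocking word: since $x$ is an equicontinuity point, Gilman's lemma gives $x\in B_{[-r,r]}(x)^{\circ}$, so a central window of $x$ fixes the coordinates $(-r,r)$ for all time, enlarging the window if necessary (which only strengthens the blocking and does not affect convergence). Because $z_n$ contains occurrences of the blocking word $w$ periodically to the left and to the right, Kurka's proposition shows that $z_n$ is itself an equicontinuity point, and being spatially periodic its forward orbit $\{F^{i}(z_n):i\ge 0\}$ is finite, hence eventually periodic. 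The only real task is then to remove the preperiod, that is, to show that $z_n$ already lies on its own $F$-cycle.

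The key structural observation is that the blocking walls act as firewalls that decouple the configuration into independent finite columns. Writing $B$ for the blocking window carried by each wall, the content of $z_n$ strictly between two consecutive walls evolves, for all time, as a function only of itself and of the fixed wall-determined evolution of its two bounding windows; nothing outside a cell can influence it and it cannot influence anything outside. This lets me introduce a finite set $\mathcal{C}$ of admissible inter-wall contents and an induced one-step map $\phi:\mathcal{C}\to\mathcal{C}$ such that the $F$-orbit of $z_n$ is governed cellwise by $\phi$; in particular $z_n$ is $F$-periodic if and only if its cell content $s_0\in\mathcal{C}$ is $\phi$-periodic.

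Since $\mathcal{C}$ is finite, $s_0$ is $\phi$-periodic as soon as $\phi$ is injective, and this is where I would spend the surjectivity hypothesis. If $\phi(s)=\phi(s')$ with $s\neq s'$, I would form the two configurations that carry the interior $s$ in every cell, respectively $s$ in every cell except one where it carries $s'$. These differ in only finitely many coordinates, yet, because the intact walls keep their blocking windows identical and absorb the radius-$r$ spread, their images under $F$ coincide everywhere. This is exactly a diamond, which by the criterion quoted above cannot occur for a surjective cellular automaton. Hence $\phi$ is injective, therefore a permutation of the finite set $\mathcal{C}$, so $s_0$ is $\phi$-periodic and $z_n$ is a genuine $F$-periodic point.

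Finally, aligning $z_n$ so that the block $y_{[-n,n]}$ occupies the coordinates $[-n,n]$ gives $z_n(-n,n)=y(-n,n)$, whence $d(z_n,y)\le 2^{-n}\to 0$, which proves that $y$ is the announced limit. I expect the genuinely delicate point to be the third paragraph, the upgrade from eventual periodicity to honest periodicity: eventual periodicity of spatially periodic points is automatic, but for surjective automata without equicontinuity it can fail to be honest periodicity (the sum automaton already shows this on configurations of odd weight), so the proof must first exploit the blocking word to confine the hypothetical collision to finitely many coordinates and only then invoke surjectivity through the no-diamond criterion. Care is also needed to take the blocking window of $w$ wide enough, of width at least $2r+1$, so that a one-step perturbation of a cell interior cannot reach the neighbouring blocking windows; this is possible because every sufficiently long subword of the equicontinuity point $x$ is $r$-blocking.
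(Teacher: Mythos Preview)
The paper does not actually prove this lemma: it attributes the result to Tisseur \cite{Tis08} and remarks that the argument is ergodic-theoretic, using that the uniform Bernoulli measure is $F$-invariant precisely when $F$ is surjective. So there is no ``paper's own proof'' to match; your proposal is necessarily a different route.

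Your combinatorial approach via pre-injectivity (the Garden of Eden / no-diamond criterion) is sound and is essentially the Blanchard--Tisseur mechanism for density of periodic points in surjective almost-equicontinuous CA. Compared to the ergodic route it is more elementary and entirely self-contained within symbolic dynamics; the ergodic argument, by contrast, packages the counting into measure invariance and avoids the explicit wall bookkeeping. One point in your write-up deserves tightening: you present $\phi:\mathcal{C}\to\mathcal{C}$ as a single time-independent self-map, but the ``blocking window'' content is itself evolving along the fixed sequence $b_0,b_1,\ldots$, so the honest object is a family $\phi_{b_t}:\mathcal{C}\to\mathcal{C}$. Your diamond argument directly establishes only that $\phi_{b_0}$ is injective, because only at time~$0$ do you know the walls are genuine copies of the blocking word $w$. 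The fix is short: once $\phi_{b_0}$ is a bijection, pull any hypothetical collision for $\phi_{b_1}$ back through $\phi_{b_0}^{-1}$ to time~$0$, build the two configurations with walls $w$ there, and apply $F^2$; pre-injectivity then forces the collision to be trivial. Iterating gives injectivity of every $\phi_{b_t}$, hence pure periodicity of the cell-content orbit and of $z_n$. You already flag the other genuine subtlety correctly: $x_{[-r,r]}$ need not itself be $r$-blocking and must in general be widened to $x_{[-N,N]}$, which changes the literal form of the approximants but not the convergence to $y$; this is a harmless imprecision already present in the lemma's statement.
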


\begin{remark}
The proof of the last lemma uses arguments from ergodic theory. To use tools
from ergodic theory we need an invariant measure. In the case of cellular
automata we know that the uniform measure is invariant iff the cellular
automaton is surjective. For more details the reader can look to the
following references \cite{Hed69},\cite{Gil87},\cite{Gil88},\cite{Tis08}.
\end{remark}

\begin{proposition}
Let $(A^{\mathbb{Z}},F)$ be a surjective cellular automaton with
equicontinuous points but without being equicontinuous; then the set of
equicontinuous factors contain an infinite union of equivalence classes $%
\widetilde{\mathit{p}}$.
\end{proposition}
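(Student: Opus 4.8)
The plan is to exhibit, for an unbounded set of integers $p$, a periodic factor whose period is exactly $p$; since dynamical systems with different periods lie in different classes $\widetilde{p}$, this shows that infinitely many classes are met. The two ingredients are the density of the special periodic points furnished by the preceding lemma and the rigidity coming from the hypothesis that $F$ is not equicontinuous, which together will force the minimal periods of the periodic points of $F$ to be unbounded.

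First I would prove that the minimal $F$-periods of periodic points are unbounded. Assume the contrary, so that every periodic point has period dividing some fixed $N$. By the preceding lemma every configuration is a limit of periodic points of the form $\left(x_{[-r,r]}\,y_{[-n,n]}\,x_{[-r,r]}\right)^{\infty}$, hence the periodic points are dense; consequently $F^{N}$ coincides with the identity on a dense set and, by continuity, $F^{N}=\mathrm{Id}$ on all of $A^{\mathbb{Z}}$. But a system with $F^{N}=\mathrm{Id}$ is periodic and therefore equicontinuous (the finite family $\mathrm{Id},F,\dots,F^{N-1}$ is uniformly equicontinuous on the compact space $A^{\mathbb{Z}}$), contradicting the hypothesis. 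Thus there is a sequence $p_{1}<p_{2}<\cdots$ of integers, each realized as the minimal period of some periodic point $z_{k}=\left(w\,u_{k}\,w\right)^{\infty}$ with $w=x_{[-r,r]}$ an $r$-blocking word; since each $z_{k}$ has infinitely many occurrences of $w$ to the left and to the right, Kurka's criterion guarantees that it is an equicontinuity point.

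Second, to each $z_{k}$ I would attach a periodic factor of period $p_{k}$ by the phase-tracking construction already used to produce a periodic factor from an equicontinuity point. The orbit of $z_{k}$ is a finite $F$-invariant set on which $F$ acts as a cyclic permutation of order $p_{k}$; equivalently, for $R$ large enough the central words $F^{i}(z_{k})(-R,R)$ are purely periodic in $i$ of period exactly $p_{k}$, and the clopen cylinders they determine are permuted cyclically by $F$. Sending the $j$-th cylinder to $j\in\mathbb{Z}/p_{k}\mathbb{Z}$ intertwines $F$ with the rotation $t\mapsto t+1$, exhibiting $\left(\mathbb{Z}/p_{k}\mathbb{Z},\,t\mapsto t+1\right)$ as a periodic factor of period exactly $p_{k}$. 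As the $p_{k}$ are pairwise distinct, the classes $\widetilde{p_{k}}$ are pairwise distinct, and the set of (equicontinuous) periodic factors meets infinitely many classes.

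The step I expect to be the crux is the unboundedness of the periods. The subtlety is that a uniform bound on the periods of merely a \emph{dense} family of periodic points already collapses the whole system, because density together with continuity upgrades $F^{N}=\mathrm{Id}$ from that family to the entire space, and $F^{N}=\mathrm{Id}$ is incompatible with non-equicontinuity. A secondary point requiring care is ensuring that the factor extracted from $z_{k}$ has period exactly $p_{k}$ and not a proper divisor: this is guaranteed by taking $p_{k}$ to be the \emph{minimal} period of $z_{k}$ and by enlarging the observation window $(-R,R)$ until the $p_{k}$ configurations on the orbit are distinguished by their restriction to it, so that the associated cylinders are genuinely disjoint and the factor map is well defined.
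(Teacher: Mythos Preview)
Your proposal is correct and follows essentially the same approach as the paper. Both arguments hinge on the identical mechanism: the special periodic points $(x_{[-r,r]}\,y_{[-n,n]}\,x_{[-r,r]})^{\infty}$ from the Tisseur lemma are dense, so a uniform bound $N$ on their periods forces $F^{N}=\mathrm{Id}$ by continuity, contradicting non-equicontinuity. The only difference is organizational: the paper assumes finitely many factor classes, takes $P=\mathrm{lcm}(p_i)$, and derives $F^{P}=\mathrm{Id}$ directly; you instead first prove the periods are unbounded and then build a factor $\mathbb{Z}/p_k\mathbb{Z}$ for each realized period $p_k$, which is the contrapositive of the same implication made explicit. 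One small caveat: your assertion that $w=x_{[-r,r]}$ is itself an $r$-blocking word is not guaranteed by the hypotheses (equicontinuity of $x$ only gives a blocking word of some length $\geq r$), but you do not actually need $z_k$ to be equicontinuous for the factor construction---restricting $F$ to the finite orbit of $z_k$ already yields the cyclic factor, which is the construction the paper itself uses.
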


\begin{proof}
Let us suppose that the set of equicontinuous factors contain a finite
number of equivalence classes $\widetilde{\mathit{p}}$.\newline
Denote by $\widetilde{\mathit{p}}_{1},\widetilde{\mathit{p}}_{2},...%
\widetilde{\mathit{p}}_{s}$ the existing $s$ equivalence classes.

Let $r$ be the radius of the cellular automaton and let $x$ be an
equicontinuous point.

Let $y\in A^{\mathbb{Z}}$ from the proof of Lemma \ref{Gil} we know that the
sequence 
\begin{equation*}
F^{i}\left( x\left( -r,r\right) y\left( -n,n\right) x\left( -r,r\right)
\right)
\end{equation*}
is eventually periodic hence associated to a periodic factor.

Let $P=\underset{1\leq i\leq s}{\func{lcm}}\left( \widetilde{\mathit{p}}%
_{i}\right) $ and the sequence of the $F-$periodic points \newline
$\left( x\left( -r,r\right) y\left( -n,n\right) x\left( -r,r\right) \right)
^{\infty }$ that converges to $y.$

We have then for any $y$ in $A^{\mathbb{Z}}$ : 
\begin{eqnarray*}
F^{P}\left( y\right) &=&F^{P}\left( \underset{k\rightarrow \infty }{\lim }%
x\left( -r,r\right) y\left( -n,n\right) x\left( -r,r\right) ^{\infty }\right)
\\
&=&\underset{k\rightarrow \infty }{\lim }F^{P}\left( x\left( -r,r\right)
y\left( -n,n\right) x\left( -r,r\right) ^{\infty }\right) \\
&=&\underset{n\rightarrow \infty }{\lim }\left( x\left( -r,r\right) y\left(
-n,n\right) x\left( -r,r\right) ^{\infty }\right) =y.
\end{eqnarray*}%
\newline
We obtain then $F^{P}=Id$ and hence $F$ is equicontinuous which is a
contradiction.
\end{proof}

\begin{corollary}
The maximal equicontinuous factor of a surjective cellular automaton with
equicontinuity points but without being equicontinuous is not a cellular
automaton.
\end{corollary}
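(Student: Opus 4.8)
The plan is to argue by contradiction, leveraging the preceding proposition together with the Blanchard--Tisseur rigidity result (Proposition \ref{AcEqut}). Suppose that the maximal equicontinuous factor $\pi :\left( A^{\mathbb{Z}},F\right) \rightarrow \left( Y,U\right) $ is a cellular automaton, say carried by some full shift $B^{\mathbb{Z}}$. Since $F$ is surjective and the factor of a surjective system is surjective, $\left( Y,U\right) $ is a surjective cellular automaton; and by its defining property it is equicontinuous. Thus $U$ would be a surjective equicontinuous cellular automaton, and Proposition \ref{AcEqut} then yields an integer $p>0$ with $U^{p}=\mathrm{Id}$, so $\left( Y,U\right) $ is periodic.

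The next step is to observe that a periodic system admits only finitely many distinct periods among its factors. Indeed, since $U^{p}=\mathrm{Id}$, any periodic factor $\left( Z,V\right) $ of $\left( Y,U\right) $ has, by the divisibility property recalled in the Factors subsection, a period dividing $p$. Hence the periods of periodic factors of $\left( Y,U\right) $ range over the finite set of divisors of $p$, so these factors meet only finitely many equivalence classes $\widetilde{\mathit{q}}$.

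Finally I would connect this to the factors of $F$ itself. Every periodic dynamical system is equicontinuous, as its iterates form a finite family of uniformly continuous maps on the compact space; therefore every periodic factor of $\left( A^{\mathbb{Z}},F\right) $ is an equicontinuous factor. By the universal property of the maximal equicontinuous factor, each factor map $\varphi :\left( A^{\mathbb{Z}},F\right) \rightarrow \left( Z,V\right) $ onto a periodic system factors as $\psi \circ \pi $ through $\left( Y,U\right) $, so every periodic factor of $F$ is in fact a factor of $\left( Y,U\right) $. Combining with the previous step, $F$ would then have only finitely many equivalence classes of periodic factors, contradicting the preceding proposition, which exhibits an infinite union of classes $\widetilde{\mathit{p}}$. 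This contradiction shows that the maximal equicontinuous factor cannot be a cellular automaton.

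I expect the only delicate point to be the passage through the universal property: one must verify that a periodic factor genuinely qualifies as an equicontinuous factor, so that the universal factorization applies, and that the resulting $\psi $ preserves periodicity, so that the period bound coming from $U^{p}=\mathrm{Id}$ transfers to \emph{all} periodic factors of $F$. Everything else is bookkeeping with the divisibility of periods and the surjectivity of factors.
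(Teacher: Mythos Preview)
Your argument is correct and follows essentially the same route as the paper: assume the maximal equicontinuous factor is a cellular automaton, deduce via Proposition~\ref{AcEqut} that it is periodic of some period $p$, and then derive a contradiction with the preceding proposition by noting that every periodic factor of $F$ must factor through it and hence have period dividing $p$. Your write-up is in fact more careful than the paper's, since you explicitly justify the key step that periodic factors are equicontinuous and therefore factor through the maximal equicontinuous factor via its universal property---the paper simply asserts ``$E$ is a factor of $M$'' without comment.
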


\begin{proof}
Suppose that the maximal equicontinuous factor is a cellular automaton and
denote it by $M$. It is then surjective and there exist a period $p$ such
that $M^{p}=M.$

As there is an infinity of surjective periodic factors classes. Choose a
periodic factor $E$ such that $q$ the period of $E$ satisfy $q>p$ as $E$ is
a factor of $M$ then $q$ is a divisor of $p$ which is a contradiction.
\end{proof}

\section{Conclusion}

Two results are shown about periodic factors of cellular automata. A
surjective cellular automaton with equicontinuity points but without being
equicontinuous has an infinity of periodic factors. As every surjective
equicontinuous cellular automaton is periodic it comes that the maximal
equicontinuous factor is not a cellular automaton. An interesting direction
is to try to characterize the maximal equicontinuous factor of a cellular
automaton.

Another problem indirectly connected to the question of the existence of
equicontinuous factors is that of transitive cellular automata. We know that
a transitive cellular automaton is weakly mixing. It is still an open
problem to know if weakly mixing implies mixing or not.

\end{document}